\documentclass[
%%% one of
%submission
final,
nomarks
%%% if all authors have the same affiliation
% , nomarks
]{dmtcs-episciences}

% DON'T LOAD ANY STYLES THAT CHANGE THE PAGE LAYOUT
% AND DON'T CHANGE THE PAGE LAYOUT BY HAND, EITHER.

\usepackage[utf8]{inputenc}
\usepackage{subfigure}

\usepackage{amsbsy}
\usepackage{amsmath,amsfonts,amssymb,amsthm,mathrsfs,mathtools}

\usepackage{bm}

% graphicx is now loaded automatically no need to put this in here anymore.
%
%\usepackage{graphicx}

% We strongly recommend to use natbib. Your colleagues deserve to be
% named in your text. PLEASE, ADAPT YOUR TEXT ACCORDINGLY, such that
% citations are grammatically correct.
\usepackage[round]{natbib}

% bold letters for sets of numbers
\newcommand{\ndN}{\mathbb{N}}

% probability of some event
\renewcommand{\Pr}[1]{\mathbb{P}(#1)}

% probability of some event (big brackets)

% expectation
\newcommand{\Ex}[1]{\mathbb{E}[#1]}

% expectation (big brackets)

% variance

% variance (big brackets)

% indicator symbol

% weak convergence

% convergence in probability

% almost sure convergence

% equality in distribution

% approximation in total variation

% Height

% Diameter

% weight sequence

%% Gromov-Hausdorff-Prokhorov

%% cal letters for classes of structures
\newcommand{\cA}{\mathcal{A}}
\newcommand{\cB}{\mathcal{B}}
\newcommand{\cC}{\mathcal{C}}

\newcommand{\cE}{\mathcal{E}}
\newcommand{\cF}{\mathcal{F}}
\newcommand{\cG}{\mathcal{G}}

\newcommand{\cS}{\mathcal{S}}
\newcommand{\cT}{\mathcal{T}}
\newcommand{\cU}{\mathcal{U}}

%% special classes of structures

% sf letters random elements of a class
\newcommand{\mA}{\mathsf{A}}
\newcommand{\mB}{\mathsf{B}}
\newcommand{\mC}{\mathsf{C}}

\newcommand{\mF}{\mathsf{F}}

\newcommand{\mT}{\mathsf{T}}

% mathfrak letters

% theorem styles

\newtheorem{theorem}{Theorem}[section]

\newtheorem{proposition}[theorem]{Proposition}
\newtheorem{lemma}[theorem]{Lemma}

\author[Benedikt Stufler]{Benedikt Stufler}
\title[Probabilistic enumeration and equivalence of nonisomorphic trees]{Probabilistic enumeration and equivalence of nonisomorphic trees}
% put your affiliation here, not your full address.
% If you like to give away your email or other parts of your address,
% THIS IS NOT THE RIGHT PLACE, your address will change, this paper
% will not.
% Just watch that your personal data that you want to communicate on
% the episcience server is always up to date.
\affiliation{
  % one line per affiliation, no postal codes, grant numbers or similar
  Vienna University of Technology, Vienna, Austria}
\keywords{Unlabelled trees, Asymptotic Enumeration}
% \received{1998-10-14}
% \revised{2002-07-19, 2014-02-05, 2015-09-09}
% \accepted{2015-09-09}
\begin{document}
% This is only used if you are compiling for a volume before vol 25
% \publicationdetails{VOL}{2015}{ISS}{NUM}{SUBM}
% This is the new form of collecting the data, starting with vol 25

\publicationdata{vol. 27:3}{2025}{18}{10.46298/dmtcs.14790}{2024-11-18; 2024-11-18; 2025-10-14}{2025-10-16}
\maketitle
\begin{abstract}
	We present a new probabilistic proof of Otter's asymptotic formula for the number of unlabelled trees with a given number of vertices.  We additionally prove a new approximation result, showing that the total variation distance between random P\'olya trees and random unlabelled trees tends to zero when the number of vertices tends to infinity. In order to demonstrate that our approach is not restricted to trees we extend our results to tree-like classes of graphs.
\end{abstract}

\section{Introduction}

A \emph{tree} is a connected graph without cycles. Cayley's  theorem states that the number $u_n$ of trees with a given $n$-element vertex set satisfies 
\begin{align}
	\label{eq:labelled}
	u_n = n^{n-2}.
\end{align}
Numerous proofs of this quantity are known~\cite{MR1579119,MR0214515,MR633783,MR1673928,MR3617364,MR260623,10.1214/23-ECP523}. A common idea is to mark a root vertex. This way, each tree with $n$ vertices corresponds to precisely $n$ rooted versions, and the exponential generating series $T(z) = \sum_{n \ge 1} \frac{n u_n}{n!}  z^n$ of rooted trees satisfies 
\[
T(z) =  z \exp\left(	 T(z)\right) \qquad \text{and} \qquad T(1/e) = 1.
\]

Two trees are \emph{isomorphic} if there exists a bijection between their sets of vertices such that any two vertices are adjacent if and only if their images are adjacent. If each of the trees is endowed with a root vertex we additionally require that each  root vertex is the image of the other under this bijection. This gives rise to two different numbers of nonisomorphic or unlabelled trees with a given number $n \ge 1$ of vertices: The number $f_n$ of  unlabelled (unrooted) trees, also called free trees, and the number $a_n$ of unlabelled rooted trees, also called P\'olya trees.   Contrarily to the labelled case, different unlabelled trees with the same number of vertices may correspond to different numbers of rooted versions. See Figure~\ref{fi:polya}.

\begin{figure}[t]
	\centering
	\begin{minipage}{\textwidth}
		\centering
		\includegraphics[width=0.35\linewidth]{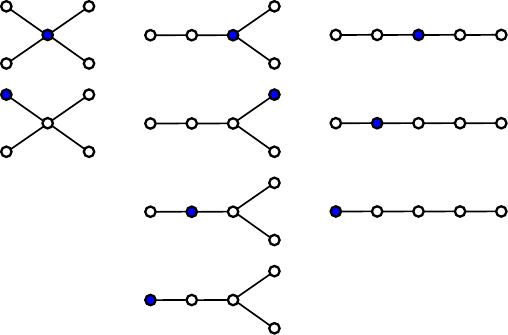}
		\caption[asdf]{Rooted unlabelled trees with five vertices grouped according to the underlying unrooted tree.}
		\label{fi:polya}
	\end{minipage}
\end{figure}

In order to tackle such challenges, P\'olya~\cite{MR1577579} developed a general theory for counting objects up to isomorphism. In particular, he reformulated Cayley's functional equation for the ordinary generating series $A(z) = \sum_{n \ge 1} a_n z^n$ to
\begin{align*}
	A(z) = z \exp \left( \sum_{i \ge 1} A(z^i)/i \right).
\end{align*}
Later, Otter~\cite{MR0025715} derived the asymptotic  formula
\begin{align}
	\label{eq:polya}
	a_n \sim c_A n^{-3/2} \rho^{-n}
\end{align}
for constants $c_A \approx 0.439924$ and $\rho \approx 0.338321$, and the fact
\begin{align*}
	A(\rho) = 1.
\end{align*}
A full asymptotic expansion was derived by Genitrini~\cite{MR3817525} using methods of analytic combinatorics~\cite{MR2483235,MR1039294}. Otter additionally obtained the asymptotic number of unlabelled trees. Our first contribution in the present work is to provide a new probabilistic proof of his famous result:
\begin{theorem}[Otter~\cite{MR0025715}]
	\label{te:main1}
	Setting $c_F = 2 \pi c_A^3$, we have as $n$ tends to infinity 
	\begin{align}
		\label{eq:free}
		f_n \sim c_F n^{-5/2} \rho^{-n}.
	\end{align}
\end{theorem}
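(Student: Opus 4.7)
The plan is to combine a centroid-based identity for free trees with a probabilistic analysis of the resulting self-convolution of the sequence $(a_n)$, exploiting the critical relation $A(\rho) = 1$. By Jordan's centroid theorem, every free tree on $n \ge 2$ vertices has a unique centroid---either a vertex (at which all hanging subtrees have size $\le n/2$) or an edge (whose removal yields two rooted trees of size $n/2$)---and classifying free trees by centroid type yields Otter's identity
\[
	f_n = a_n - \tfrac{1}{2}\sum_{k=1}^{n-1} a_k a_{n-k} + \tfrac{1}{2}\one_{\{n \text{ even}\}}\, a_{n/2},
\]
equivalently $F(z) = \tfrac{1}{2}\bigl(1 - (A(z) - 1)^2\bigr) + \tfrac{1}{2}A(z^2)$, where $F(z) = \sum_{n\ge 1} f_n z^n$.

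The probabilistic content is that $A(\rho) = 1$ makes $\mu(k) := a_k \rho^k$ a probability distribution on $\ndN$---the size law of a Boltzmann-random P\'olya tree at the critical parameter $\rho$. For independent $\xi_1, \xi_2 \sim \mu$ the convolution rewrites as $\rho^n \sum_{k=1}^{n-1} a_k a_{n-k} = \Prb{\xi_1 + \xi_2 = n,\; \xi_1, \xi_2 \ge 1}$. The tail $\mu(k) \sim c_A k^{-3/2}$ is regularly varying of index $-3/2$, placing $\mu$ in the domain of attraction of the positive stable law of index $1/2$. The big-jump principle then yields $\Prb{\xi_1 + \xi_2 = n} \sim 2 \mu(n)$, which cancels exactly with $2 a_n \rho^n$ in the identity at the $n^{-3/2}$ level; since $A(z^2)$ has radius of convergence $\sqrt{\rho} > \rho$, the $a_{n/2}$ contribution is exponentially negligible.

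The $n^{-5/2}\rho^{-n}$ asymptotic thus emerges from a second-order correction. To identify the constant, I would use the singular expansion $A(z) = 1 - h_1 (1-z/\rho)^{1/2} + h_2 (1-z/\rho) - h_3 (1-z/\rho)^{3/2} + O\bigl((1-z/\rho)^2\bigr)$. The given asymptotic of $a_n$ fixes $h_1 = 2\sqrt{\pi}\, c_A$; the coefficient $h_2$ is then determined by the functional equation $A(z) = z\exp\bigl(\sum_{i\ge 1} A(z^i)/i\bigr)$, since the terms with $i \ge 2$ are analytic at $\rho$. Concretely, the singular part of $A - \log A = 1 + \tfrac{1}{2}(A-1)^2 - \tfrac{1}{3}(A-1)^3 + \ldots$ must be analytic in $z$, and matching the $(1-z/\rho)^{3/2}$ coefficient to zero forces $h_2 = h_1^2/3 = 4\pi c_A^2/3$. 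Plugging into $F(z)$ gives leading singular term $h_1 h_2 (1-z/\rho)^{3/2}$, and standard transfer via $[z^n](1-z/\rho)^{3/2} \sim \tfrac{3}{4\sqrt{\pi}}\, n^{-5/2}\rho^{-n}$ yields $c_F = h_1 h_2 \cdot \tfrac{3}{4\sqrt{\pi}} = 2\pi c_A^3$.

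The main obstacle is precisely this subleading step: the leading-order cancellation is automatic from the probabilistic normalization $A(\rho) = 1$, so isolating the $n^{-5/2}$ term requires either a second-order local limit theorem for sums of heavy-tailed iid random variables, or the algebraic identity $h_2 = h_1^2/3$ extracted from the functional equation. The compactness of the latter derivation rests entirely on the $-\tfrac{1}{3}(A-1)^3$ term in the Taylor expansion of $A - \log A$ supplying the factor $1/3$ that ultimately produces the constant $2\pi c_A^3$.
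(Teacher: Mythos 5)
Your argument is correct, but it follows a genuinely different route than the paper, and in fact precisely the one the paper deliberately avoids. You rely on Otter's dissymmetry identity $F(z) = A(z) - \tfrac{1}{2}\bigl(A(z)^2 - A(z^2)\bigr)$ and then extract the subleading singular coefficient of $A(z)$ from the P\'olya functional equation: the analyticity of $A - \log A = -\log z - \sum_{i\ge 2}A(z^i)/i$ near $\rho$ forces the coefficient of $(1-z/\rho)^{3/2}$ in $\tfrac{1}{2}(A-1)^2 - \tfrac{1}{3}(A-1)^3$ to vanish, giving $h_2 = h_1^2/3$, after which singularity transfer yields $c_F = 2\pi c_A^3$. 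I checked the algebra and the constants work out. This is essentially the classical analytic proof, which the paper acknowledges in its introduction (Otter, Harary, Bodirsky--Fusy--Kang--Vigerske, and especially the singularity-analysis version in the reference the paper cites as [Thm.~13, MR2873207]).

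The paper instead states explicitly that its proof ``does not recover the dissymmetry equation.'' Its route is to enumerate $\cF$-symmetries: a symmetry with at least one fixed point has its fixed points forming a subtree, and the attached non-fixed branches are permuted freely. This gives $F(z) = \mathrm{Sym}_0(\cU)(z) + U\bigl(z\exp(\sum_{i\ge 2}A(z^i)/i)\bigr)$ with $U$ the EGF of labelled unrooted trees, so that $[z^n]\bigl(F(z)-\mathrm{Sym}_0(\cU)(z)\bigr)\rho^n = \tfrac{1}{2}\Pr{S_N = n}$, where $S_N$ is a random sum whose summands have finite exponential moments. A local limit theorem for random sums and Stirling's formula then give the $n^{-5/2}$ behaviour directly, and $\mathrm{Sym}_0(\cU)(z)\preceq A(z^2)$ kills the fixed-point-free part exponentially. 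What the paper's approach buys is a uniform mechanism (the same random-sum representation is used for $A(z)$, for $F(z)$, for the total-variation theorem, and for the extensions to degree-restricted trees and subcritical graph classes) and it sidesteps the need to establish or manipulate a singular expansion of $A$; what your approach buys is economy of auxiliary machinery if one is willing to take the dissymmetry equation and $\Delta$-analyticity of $A$ as given.

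One further remark: your middle paragraph on the stable-law/big-jump principle is an aside rather than part of the working argument. It correctly explains \emph{why} the $n^{-3/2}$ terms cancel, but the actual extraction of the $n^{-5/2}$ constant in your proof comes entirely from the singular expansion and the $-\tfrac{1}{3}(A-1)^3$ term; the probabilistic framing does no work there. By contrast, in the paper the local limit theorem \emph{is} the engine that produces $n^{-5/2}$, because the exponent $-5/2$ comes from $\Pr{N=\lfloor n/\Ex{X}\rfloor}$ with $N$ distributed according to $2U(z/e)$, a genuinely probabilistic step.
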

Otter's method is based on establishing the dissymmetry  equation for the ordinary generating series $F(z) = \sum_{n \ge 1} f_n z^n$
\begin{align*}
	F(z) = A(z) - \frac{1}{2}(A(z)^2 - A(z^2)),
\end{align*}
from which the asymptotic expansion of $f_n$ may readily be deduced. Alternative proofs of the dissymmetry equation were given by Harary~\cite{MR78687} by using directly equations by P\'olya~\cite{MR1577579}, and by Bodirsky, Fusy, Kang and Vigerske~\cite{MR2810913} by introducing the cycle pointing method. Another proof of Equation~\eqref{eq:free} was given in~\cite[Thm. 13]{MR2873207} using analytic integration and singularity analysis. %Note that the second order asymptotics of $a_n$ are required to deduce the first order asymptotics of $f_n$ from this equation.
The proof of Theorem~\ref{te:main1} we present here is based on probabilistic methods and  does not recover the dissymmetry equation.

The development of methods for asymptotically counting objects up to symmetry is of general interest since there are clear problems in enumerative combinatorics that remain open and pose a serious challenge. A prominent example is the asymptotic enumeration of unlabelled planar graphs, that is, graphs  that may be drawn in the plane so that edges only intersect at endpoints. Liskovet and Walsh~\cite{MR945238} formulated a program for their enumeration. Bender, Gao and Wormald~\cite{MR1946145} established the asymptotic number of labelled $2$-connected planar graphs, and the asymptotic number of connected and unrestricted labelled planar graphs was then obtained in the breakthrough result by Gim\'enez and Noy~\cite{MR2476775}. An alternative combinatorial approach was later given by Chapuy, Fusy, Kang and Shoilekova~\cite{MR2465772}, and a third probabilistic proof via large deviation methods was  given by S.~\cite{zbMATH07665039}. Determining the asymptotic number of nonisomorphic planar graphs with a given number of vertices however remains a major challenge that requires the development of new methods and approaches.

Apart for enumerating combinatorial structures, we also want to understand their typical shape when generated at random. When studying the uniform random free tree~$\mF_n$ with $n$ vertices the main strategy is to prove properties of the random $n$-vertex P\'olya tree $\mA_n$ instead and then transfer the results.  The trivial fact that any free tree with $n$ vertices has at least one and at most $n$ rooted version entails that for any set $\cE$ of free trees
\begin{align}
	\label{eq:rough}
	\Pr{F(\mA_n) \in \cE} \frac{a_n}{n f_n } \le \Pr{\mF_n \in \cE} \le \Pr{F(\mA_n) \in \cE} \frac{a_n}{f_n}.
\end{align}
Here $F(\mA_n)$ denotes the unrooted tree obtained by forgetting which vertex of $\mA_n$ is marked. Hence, the first order asymptotics for $a_n$ and $f_n$ imply that any property that holds with probability $o(1/n)$ for $F(\mA_n)$ holds with probability $o(1)$ for $\mF_n$.

This crude bound may serve in some cases for obtaining concentration inequalities for some graph parameter, yet it is not strong enough for transferring fluctuations. A successful approach in this regard for additive parameters is to consider multivariate generating series $A(z,w_1, w_2, \ldots)$ where the additional variables mark the quantities under consideration. For example, $w_i$ could mark the number of vertices of degree~$i$. The strategy is then to determine equations for $A(z,w)$ that yield a central limit theorem for the corresponding functional of $\mA_n$, and transfer these equations via the dissymmetry equality to bivariate equations for $F(z,w)$, which by employing singularity analysis then yield the same central limit theorem for the functional of~$\mF_n$. 

A minor drawback to this generating function approach is that we need to perform this transfer individually for each functional  under consideration. A more severe drawback is that sometimes this transfer is not feasible, for example when we want to prove functional convergence of some contour function to a limiting stochastic process, or  Gromov--Hausdorff--Prokhorov convergence to a limiting random real tree. This problem was noted for example by Haas and Miermont~\cite{MR3050512}, who established Aldous' Brownian tree as scaling limit of random P\'olya trees, and noted that at that time the scaling limit of unlabelled unrooted trees was still an open problem.  In order to address this issue, an approach that allows a transfer of ``practically all''  properties of $\mA_n$ to $\mF_n$ was  presented by S.~\cite[Thm. 1.3]{MR3983790}. Using the mentioned method of cycle pointing~\cite{MR2810913} it was shown that there exists a random tree $\mB_n$ with stochastically bounded size $K_n = O_p(1)$ so that $\mB_n$ is independent from $(\mA_k)_{k \ge 1}$ and such that the unrooted tree $\mA_{n - K_n} + \mB_n$ obtained by identifying the root vertices of $\mA_{n - K_n}$ and  $\mB_n$ satisfies
\[
\lim_{n \to \infty} d_{\mathrm{TV}}(\mA_{n - K_n} + \mB_n, \mF_n) = 0. 
\]
Here $d_{\mathrm{TV}}$ denotes the total variation distance. That is, for any two random variables $X$ and $Y$ with values in a common measurable space it is given by \[
d_{\mathrm{TV}}(X,Y) = \sup_\cE \left| \Pr{X \in \cE} - \Pr{Y \in \cE} \right|,
\]
with the index $\cE$ ranging over all measurable subsets of that space. Thus, the random free tree $\mF_n$ looks like a slightly smaller random P\'olya tree $\mA_{n - K_n}$, with some tree attached to its root to make up for the missing vertices. For all practical purposes, the influence of a stochastically bounded section of a tree is so small that it may be ignored, hence allowing the transfer of both stochastic process convergence as well as central limit theorems for graph parameters from $\mA_n$ to $\mF_n$.

Although for practical purposes the issue of transferring results from $\mA_n$ to $\mF_n$ has hence been resolved, it is somewhat unsatisfactory that the statement of this approximation theorem is rather cumbersome, in particular in light of the simpler and even stronger principle we derive in the present work:

\begin{theorem}
	\label{te:main2}
	It holds that
	\[
	\lim_{n \to \infty} d_{\mathrm{TV}}(F(\mA_n), \mF_n) = 0.
	\]
	More precisely, for each $1/2 < \alpha < 1$ there exist constants $c,C>0$ such that for all $n \ge 1$ and all sets $\cE$
	\[
	|\Pr{F(\mA_n) \in \cE} - \Pr{\mF_n \in \cE}| \le \exp(-cn^{2 \alpha -1}) + \Pr{F(\mA_n) \in \cE} C n^{\alpha-1}.
	\]
\end{theorem}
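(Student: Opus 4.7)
The density ratio at any free tree $t$ of size $n$ equals
\[
\frac{\Pr{F(\mA_n) = t}}{\Pr{\mF_n = t}} = \frac{r(t)\, f_n}{a_n},
\]
where $r(t)$ is the number of rooted versions of $t$, equivalently the number of orbits of $\mathrm{Aut}(t)$ on $V(t)$. By Theorem~\ref{te:main1} and~\eqref{eq:polya}, $a_n/f_n \sim n/(2\pi c_A^2)$, so this density ratio equals $(1+o(1)) \cdot 2\pi c_A^2 \cdot r(t)/n$. The plan is to establish sub-Gaussian concentration of $r(F(\mA_n))$ around $a_n/f_n$ and then run a good-event splitting argument.

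The main step will be the following moderate-deviation estimate: for each $\alpha \in (1/2, 1)$ I aim to show that for some $c>0$,
\[
\Pr{|r(F(\mA_n)) - a_n/f_n| > (a_n/f_n)\, n^{\alpha-1}} \le \exp(-c\, n^{2\alpha-1}).
\]
Using the identity $r(t) = \sum_{v \in V(t)} |\mathrm{Orb}(v)|^{-1}$, where $\mathrm{Orb}(v)$ is the orbit of $v$ under $\mathrm{Aut}(t)$, one sees that $r$ is a sum of $n$ bounded terms in $(0,1]$, the overwhelming majority of which equal $1$ because typical vertices in a large random tree lie in trivial orbits. The non-trivial contributions are controlled by the multiplicities of fringe-subtree types among siblings at the ancestors of~$v$. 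Exploiting the Galton--Watson-like / Boltzmann description of random P\'olya trees, these become weakly dependent additive contributions with effective variance of order~$n$; a Hoeffding-type or cumulant moderate-deviation bound then yields the Gaussian tail, since $(n^\alpha)^2/n = n^{2\alpha-1}$.

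Given the key lemma, define the good event $\cG_n := \{t : |r(t) - a_n/f_n| \le (a_n/f_n)\, n^{\alpha-1}\}$. On $\cG_n$ the density ratio is $1 + O(n^{\alpha-1})$, so for any measurable $\cE$,
\[
|\Pr{F(\mA_n) \in \cE \cap \cG_n} - \Pr{\mF_n \in \cE \cap \cG_n}| \le C\, n^{\alpha-1}\, \Pr{F(\mA_n) \in \cE}.
\]
On the complement, the key lemma gives $\Pr{F(\mA_n) \in \cG_n^c} \le \exp(-c\, n^{2\alpha-1})$, and the rough inequality~\eqref{eq:rough} together with $a_n/f_n = O(n)$ from Theorem~\ref{te:main1} yields $\Pr{\mF_n \in \cG_n^c} \le (a_n/f_n)\, \Pr{F(\mA_n) \in \cG_n^c} \le \exp(-c' n^{2\alpha-1})$ after absorbing a polynomial prefactor. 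Summing the two contributions gives the claimed bound.

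The main obstacle will be the key lemma. A direct variance computation for $r$ gives only polynomial decay via Chebyshev, not the stretched-exponential rate required. The combinatorial subtlety is that $|\mathrm{Orb}(v)|$ depends on sibling multiplicities at \emph{every} ancestor of $v$, so $r$ is not literally a local functional of the tree. I expect this to be resolved by decomposing contributions to $r$ according to the size of the fringe subtrees involved---large fringe subtrees very rarely repeat among siblings---isolating a neighbourhood of the center of $F(\mA_n)$, and then applying a martingale-difference or cumulant estimate tailored to the P\'olya-tree setting.
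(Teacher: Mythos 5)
Your proof skeleton (density ratio $r(t)f_n/a_n$, good-event splitting, transfer via the crude bound~\eqref{eq:rough}) is sound, but the entire weight of the argument rests on the key lemma
\[
\Prb{\,|r(F(\mA_n)) - a_n/f_n| > (a_n/f_n)\,n^{\alpha-1}\,} \le \exp(-c\,n^{2\alpha-1}),
\]
which you do not prove and which does not follow from the routes you suggest. You correctly observe that $r(t)=\sum_{v}|\mathrm{Orb}(v)|^{-1}$ is a sum of $n$ bounded terms, but the dependence of these terms on the global automorphism group is severe: $r$ is \emph{not} Lipschitz under local modifications. For example, $r(P_n)\approx n/2$ for the path, but attaching a single extra leaf near one end kills the reflection and yields $r\approx n$, a $\Theta(n)$ jump from one vertex change. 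So bounded-differences, Hoeffding, or any naive martingale-difference estimate fails outright; one would have to isolate and rule out a subset of ``almost-symmetric'' configurations before any concentration machinery applies, and your sketch only gestures at this. As you yourself note, Chebyshev gives only a polynomial rate, and no sub-Gaussian cumulant bound is supplied.

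The paper takes a genuinely different route that sidesteps this difficulty entirely. It works with \emph{symmetries} $(T,\sigma)$, i.e.\ labelled trees paired with an automorphism; every free tree has exactly $n!$ of them, and by Burnside $\sum_{\sigma\in\mathrm{Aut}(F)}|\mathrm{Fix}\,\sigma| = |\mathrm{Aut}(F)|\,r(F)$, so both $\Pr{\mF_n\in\cE}$ and $\Pr{F(\mA_n)\in\cE}$ can be written as sums over the \emph{same} index set of symmetries $\cE'$ with respective weights $1/(n!f_n)$ and $r(S)/(n!a_n)$, where $r(S)$ is now the number of fixed points of the automorphism $\sigma$ (not the orbit count of $F$). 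The crucial gain is that the number of fixed points of a uniformly random size-$n$ symmetry, conditioned to have at least one, is by construction distributed exactly as $N$ given $\{S_N=n\}$ in the random-walk representation from~\eqref{eq:diff5}, so the stretched-exponential tail bound follows in one line from Lemma~\ref{le:meddeviation}; no bespoke concentration argument for an automorphism-dependent tree functional is ever needed. The good event is then a subset $\cE''$ of \emph{symmetries} with $|r(S)-n/\Ex{X}|<n^\alpha$, the bad symmetries contribute $\exp(-\Theta(n^{2\alpha-1}))$, and the summand-by-summand comparison on $\cE''$ uses the second-order asymptotic $a_n/f_n=n/\Ex{X}+O(1)$ from~\cite{MR3817525}. (One can in fact \emph{derive} your key lemma a posteriori from this, via Burnside and Markov with a polynomial loss, but that is the conclusion of the argument, not an available starting point.) To complete your proposal you would either need to carry out the delicate fringe-subtree/martingale analysis you outline, or switch to the paper's symmetry-level bookkeeping, which reduces the concentration step to a standard random-walk estimate.
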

In other words,  random P\'olya trees and random free trees are asymptotically equivalent. It appears that this fact hasn't been noted anywhere in the literature so far and furnishes all previous transfer arguments. We also provide extensions of this result  to degree restricted free trees and tree-like classes of graphs in Section~\ref{sec:extension}.

\section{Proof of main theorems}

Given an $n$-element set $V$, the symmetric group $\cS[V]$ on that set operates on the set of trees $\cU[V]$ with vertex set $V$ in a natural way, so that $\sigma.T$ is the tree obtained by relabelling its vertices according to some permutation $\sigma \in \cS[V]$. The orbits of this group operation correspond bijectively to the $n$-vertex free trees. We let $\cF_n$ denote the collection of $n$-vertex free trees, and set
\[
\mathrm{Sym}(\cU)[V] = \{(T, \sigma)  \mid T \in \cU[V], \sigma \in \cS[V], \sigma.T = T\}.
\]
The elements of this set are called \emph{symmetries}.
The cardinality of the orbit of an element from $\cU[V]$ equals the index of its stabilizer. Hence the function
\[
\phi: \mathrm{Sym}(\cU)[V]  \to \cF_n
\]
that maps a tree $T$ paired with one of its automorphisms $\sigma$ to the corresponding free tree has the property, that
\[
|\phi^{-1}(\{F\})| = n!
\]
for each free tree $F \in \cF_n$. This well-known fact~\cite{MR633783} lies at the heart of the theory of cycle index series, and ensures that the exponential generating series 
\[
\mathrm{Sym}(\cU)(z) = \sum_{n \ge 1} \frac{1}{n!} |\mathrm{Sym}(\cU)[\{1, \ldots, n\}]| z^n
\]
satisfies
\[
\mathrm{Sym}(\cU)(z) = F(z).
\]
For each integer $k \ge 0$ we let \[
\mathrm{Sym}_{k}(\cU)[V] \subset \mathrm{Sym}(\cU)[V]
\]
denote the subset of all symmetries with precisely $k$ fixed points. We define associated generating series analogously. 

Note that if a symmetry $(T, \sigma)$ has fixed points, then any vertex on the unique path between two fixed points is also a fixed point. Thus, the fixed points of $\sigma$ form a subtree of $T$. Furthermore, the non-fixed point branches attached to a fixed point $v$ are permuted by the symmetry, obviously without fixing any single branch. Thus, we may view $v$ together with all its non-fixed points branches as a symmetry of a rooted tree that only fixes the root-vertex. 

It follows that
\begin{align}
	F(z) = \mathrm{Sym}_0(\cU)(z) + U(H(z))
\end{align}
with
\[
U(z) = \sum_{n \ge 1} \frac{n^{n-2}}{n!} z^n
\]
the exponential generating series of labelled (unrooted) trees, and
\[
H(z) = z\exp \left( \sum_{i \ge 2} A(z^i) /i \right)
\]
the exponential generating series of symmetries of rooted trees that only fix the root vertex. The equation for $H(z)$ follows from the fact that any such symmetry induces a fixed point free permutation $\nu$ of the branches attached to the root, which may be grouped according to the length of the corresponding cycle of $\nu$. For any $i \ge 2$,  $A(z^i)/i$ enumerates $i$-tuples of (isomorphic) rooted trees equipped with an automorphism that permutes them cyclically, see for example~\cite[Prop. 10]{MR633783}.   Hence $\exp( \sum_{i \ge 2} A(z^i)/i)$ counts unordered collections of such objects for all $i \ge 2$, yielding the stated equation for $H(z)$.

Since $A(\rho) = 1$ we know that
\[
\rho \exp \left( \sum_{i \ge 2} A(\rho^i) /i  \right) = 1/e,
\]
and using $T(z) = z U'(z)$ we obtain
\[
U(1/e) = 1/2.
\]
Let $X_1, X_2, \ldots$ denote independent copies of a random variable $X$ with probability generating series
\[
\Ex{z^X} = \rho z\exp \left(1 + \sum_{i \ge 2} A((\rho z)^i) /i  \right).
\]
Note that $X$ has finite exponential moments. For each $k \ge 0$ we set $S_k = \sum_{i=1}^k X_i$ and let $N$ denote an independent random variable with probability generating series
\[
\Ex{z^N} = 2U(z/e) .
\]
This way,
\begin{align}
	\label{eq:diff5}
	[z^n] \left( F(z) - \mathrm{Sym}_0(\cU)(z) \right) = \frac{1}{2} \Pr{S_N = n} \rho^{-n} .
\end{align}
By~\cite[Thm. 1, (ii), (iii)]{zbMATH07179510} and~\cite[Thm. 1.1]{zbMATH00902764} it follows that
\begin{align*}
	\Pr{S_N = n} \sim \Ex{X}^{-1} \Pr{N= \lfloor n/\Ex{X} \rfloor }
\end{align*}
as $n \to \infty$. Hence, by Stirling's formula
\begin{align}
	\label{eq:almost}
	[z^n] \left( F(z) - \mathrm{Sym}_0(\cU)(z) \right) \sim \frac{1}{\sqrt{2\pi}} \Ex{X}^{3/2} n^{-5/2} \rho^{-n}.
\end{align}
The centre of a tree is obtained by simultaneously removing all leaves over and over again until we are left with either an edge or a vertex. The centre is left invariant by any automorphism. Thus, a symmetry with no fixed points must have an edge as centre, and it must transpose the ends of this edge. Thus, the two trees attached to the ends must be isomorphic. It follows that
\begin{align}
	\label{eq:sym0}
	[z^n] \mathrm{Sym}_0(\cU)(z) \le [z^n]A(z^2)
\end{align}
for all $n \ge 1$. Using $\rho <1$ it immediately follows from~\eqref{eq:almost} that
\begin{align}
	[z^n] F(z) \sim \frac{1}{\sqrt{2\pi}} \Ex{X}^{3/2} n^{-5/2} \rho^{-n}.
\end{align}

The class $\cA$ of P\'olya trees may in fact be enumerated analogously. The only difference is that the class $\cT$ of rooted trees takes the place of the class $\cU$ of unrooted trees, and symmetries are required to fix the root vertex. In particular, each symmetry of $\cA$ has at least one fixed point. Hence
\[
A(z) = \mathrm{Sym}(\cT)(z) = T\left( z\exp \left( \sum_{i \ge 2} A(z^i) /i  \right)\right)
\]
and 
\[
[z^n] A(z) = \Pr{S_{\tilde{N}} = n} \rho^{-n}
\]
with $\tilde{N}$ denoting an independent random variable with probability generating function
\[
\Ex{z^{\tilde{N}}} = T(z/e).
\]
By~\cite[Thm. 1, (ii), (iii)]{zbMATH07179510} and~\cite[Thm. 1.1]{zbMATH00902764} we have again
\[
\Pr{S_{\tilde{N}} =n} \sim \Ex{X}^{-1} \Pr{\tilde{N} = \lfloor n / \Ex{X}\rfloor}
\]
and thus
\begin{align}
	[z^n] A(z) \sim \frac{1}{\sqrt{2\pi}} \Ex{X}^{1/2} n^{-3/2} \rho^{-n}.
\end{align}
Hence, we recover Equations~\eqref{eq:polya} and~\eqref{eq:free} with 
\begin{align}
	c_A = \frac{1}{\sqrt{2\pi}} \Ex{X}^{1/2} \qquad \text{and} \qquad c_F = \frac{1}{\sqrt{2\pi}} \Ex{X}^{3/2} .
\end{align} This concludes the proof of Theorem~\ref{te:main1}.

We proceed with the proof of Theorem~\ref{te:main2}. Let $1/2 < \alpha < 1$ be given. Our rough strategy is to bound the probability for a symmetry's number of fixed points to  deviate more than $n^\alpha$ from its asymptotic expectation, and to compare probabilities for rooted and unrooted trees when the fixed points do lie in the correct window.

The random free tree $\mF_n$ may be generated by taking a uniformly selected symmetry from the set $\mathrm{Sym}(\cU)[\{1, \ldots, n\}]$ instead. By Equation~\eqref{eq:sym0} the probability for this symmetry to have no fixed points is given by
\[
\frac{[z^n] \mathrm{Sym}_0(\cU)(z)}{f_n} = O(\rho^{n/2}).
\]
By construction of $N$ and $S_N$, for any $1 \le k \le n$ the probability $\Pr{ N = k \mid S_N = n}$ is equal to the probability to observe exactly $k$ fixed points in a random $n$-sized symmetry that has been conditioned to exhibit at least one fixed point. Thus
\[
\Pr{ N = k \mid S_N = n} = \frac{[z^n]\mathrm{Sym}_k(\cU)(z) }{f_n - [z^n] \mathrm{Sym}_0(\cU)(z)  }.
\]
Hence, using Equation~\eqref{eq:diff5}, it follows that
\[
\frac{[z^n] \mathrm{Sym}_k(\cU)(z)}{f_n} = \Pr{ N = k , S_N = n} \frac{1}{2 f_n  \rho^n}.
\]
Hence
\[
\sum_{\substack{0 \le k \le n \\ |k - n/\Ex{X}| \ge n^\alpha}} \frac{[z^n] \mathrm{Sym}_k(\cU)(z)}{f_n} = O(\rho^{n/2}) + O(n^{5/2}) \sum_{\substack{1 \le k \le n \\ |k - n/\Ex{X}| \ge n^\alpha}} \Pr{S_k = n}.
\]
Using Lemma~\ref{le:meddeviation} it follows that
\begin{align}
	\label{eq:hl2}
	\sum_{\substack{0 \le k \le n \\ |k - n/\Ex{X}| \ge n^\alpha}} \frac{[z^n] \mathrm{Sym}_k(\cU)(z)}{f_n} \le \exp( - \Theta(n^{2\alpha-1})).
\end{align}
By analogous arguments (or by combining the rough bounds~\eqref{eq:rough}
with Inequality~\eqref{eq:hl2}), a similar bound holds for the symmetries of rooted trees:
\[
\sum_{\substack{1 \le k \le n \\ |k - n/\Ex{X}| \ge n^\alpha}} \frac{[z^n] \mathrm{Sym}_k(\cT)(z)}{a_n} \le \exp( - \Theta(n^{2\alpha-1})).
\]
For P\'olya trees, bounds and limits for the number of fixed points have also been carried out in previous works~\cite{MR3773800,MR3853863,MR3764337}. They may also be expressed as component counts of a Gibbs partition in the dense regime~\cite{https://doi.org/10.48550/arxiv.2204.06982}.

For any set $\cE$ of $n$-vertex free trees we can form the corresponding set $\cE'$ of $n$-vertex symmetries, and the subset $\cE'' \subset \cE'$ of these symmetries $S$ whose number $r(S)$ of fixed points satisfies $|k - n/\Ex{X}|< n^\alpha$. Hence
\begin{align*}
	|\Pr{\mF(A_n) \in \cE} - \Pr{\mF_n \in \cE}| &\le  \exp( - \Theta(n^{2\alpha-1})) + \sum_{S \in \cE''} \left| \frac{r(S)}{n!a_n} - \frac{1}{n!f_n} \right|. %\\
	%	&\le  \exp( - \Theta(n^{2\alpha-1})) + \sum_{S \in \cE''} \left| \frac{r(S)}{n!a_n} - \frac{1}{n!f_n} \right|
\end{align*}
Using the second order asymptotics~\cite{MR3817525} for P\'olya trees we obtain \begin{align}
	\label{eq:second}
	a_n/f_n = n/\Ex{X} + O(1),
\end{align} and it follows that uniformly for all $S \in \cE''$
\[
\left| \frac{r(S)}{n!a_n} - \frac{1}{n!f_n} \right| =\frac{r(S)}{n!a_n}  O(n^{\alpha-1}).
\]
Hence
\begin{align*}
	|\Pr{\mF(A_n) \in \cE} - \Pr{\mF_n \in \cE}| &\le  \exp( - \Theta(n^{2\alpha-1})) +  O(n^{\alpha-1}) \sum_{S \in \cE''} \frac{r(S)}{n!a_n} \\
	&\le \exp( - \Theta(n^{2\alpha-1})) +  O(n^{\alpha-1}) \Pr{\mF(A_n) \in \cE}.
\end{align*}
This concludes the proof of Theorem~\ref{te:main2}.

\section{Extensions}
\label{sec:extension}

\subsection{Trees with degree restrictions}

Let $\Omega \subset \ndN = \{1, 2, \ldots\}$ denote a fixed set of positive integers such that $1 \in \Omega$ and $k \in \Omega$ for at least one $k \ge 3$. Since we already treated case with no degree restrictions, we additionally assume $\Omega \ne \ndN$. 

Set $\Omega^* = \{i-1 \mid i \in \Omega\}$.  We would like to compare the uniform random free tree $\mF_n^\Omega$ with vertex degrees in $\Omega$ with the uniform random P\'olya tree $\mA_n^{\Omega^*}$ with vertex outdegrees in $\Omega^*$. It is clear  that $\mA_n^{\Omega^*}$ is well defined for $n \equiv 1 \mod \gcd \Omega^*$ large enough, and $\mF_n^\Omega$ is well-defined for $n \equiv 2 \mod \gcd \Omega^*$ large enough. In the following we implicitly only consider values of $n$ where the corresponding model makes sense.

S.~\cite[Thm. 1.3]{MR3983790} proved  that indeed  $\mF_n^\Omega$ admits an approximation in total variation by an $(n - O_p(1))$-vertex random P\'olya tree with outdegrees in $\Omega^*$ and a small tree attached to its root to reach total vertex size $n$.

However, our assumption $\Omega \ne \ndN$ ensures that there exists an integer $k_0 \ge 1$ with $k_0 \in \Omega^* \setminus \Omega$. The limiting probability for the root of $\mA_n^{\Omega^*}$ to have degree $k_0$ is positive, whereas $\mF_n^\Omega$ by construction has no vertex with degree $k_0$. Thus, $\mF_n^\Omega$ and $\mA_n^{\Omega^*}$ are not asymptotically equivalent, and S.~\cite[Thm. 1.3]{MR3983790} already provides the best comparison.

In order to achieve an approximation in total variation we need to define  a random P\'olya tree $\tilde{\mA}_n^{\Omega}$ whose vertex degrees are required to lie in $\Omega$, so that the root has outdegree in $\Omega$ and all other vertices have outdegree in $\Omega^*$.

\begin{proposition}
	\label{pro:degree}
	It holds that
	\[
	\lim_{n \to \infty} d_{\mathrm{TV}}(F(\tilde{\mA}_n^\Omega), \mF_n^\Omega) = 0.
	\]
	For each $1/2 < \alpha < 1$ there exist constants $c,C>0$ such that for all $n \ge 1$ and all sets $\cE$
	\[
	|\Pr{F(\tilde{\mA}_n^\Omega) \in \cE} - \Pr{\mF_n^\Omega \in \cE}| \le \exp(-cn^{2 \alpha -1}) + \Pr{F(\tilde{\mA}_n^\Omega) \in \cE} C n^{\alpha-1}.
	\]
\end{proposition}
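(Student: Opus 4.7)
The plan is to mimic the proof of Theorem~\ref{te:main2} with $\cU^\Omega$ and $\tilde{\cA}^\Omega$ replacing $\cU$ and $\cA$. By the same argument as in Section~2, $\mathrm{Sym}(\cU^\Omega)(z) = F^\Omega(z)$ and $\mathrm{Sym}(\tilde{\cA}^\Omega)(z) = \tilde{A}^\Omega(z)$. The fixed points of a symmetry $(T,\sigma)$ still form a subtree $T'$, and the non-fixed branches still come in orbits of length at least $2$. Since the root of any non-fixed branch is a non-root vertex of the underlying tree, its outdegree lies in $\Omega^*$, so each non-fixed branch is an element of $\cA^{\Omega^*}$. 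The new feature is a local constraint at each vertex $v \in T'$: $\deg_{T'}(v) + k_v \in \Omega$, where $k_v$ counts the non-fixed branches at $v$. This couples the fixed subtree to the non-fixed orbits, so the substitution $F(z) - \mathrm{Sym}_0(\cU)(z) = U(M(z))$ from the unrestricted proof no longer applies directly.

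The first main step is to establish the analogues of~\eqref{eq:diff5}, namely
\[
[z^n]\tilde{A}^\Omega(z) = \Prb{S_{\tilde{N}^\Omega} = n}\rho_\Omega^{-n}, \qquad [z^n]\bigl(F^\Omega(z) - \mathrm{Sym}_0(\cU^\Omega)(z)\bigr) = C_\Omega\,\Prb{S_{N^\Omega} = n}\rho_\Omega^{-n},
\]
where $\rho_\Omega$ is the common radius of $A^{\Omega^*}$ and $\tilde{A}^\Omega$, the iid variables $X_i$ have a probability generating function inherited from the local ``vertex plus non-fixed orbits'' structure at $\rho_\Omega$, and $N^\Omega, \tilde{N}^\Omega$ are independent discrete variables encoding the size of the fixed subtree. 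I would obtain these by working at the P\'olya cycle-index level: at each fixed vertex the multiset of children decomposes into $1$-cycles (fixed children, which continue the fixed subtree) and longer cycles (non-fixed orbits of $\cA^{\Omega^*}$-trees), and we restrict the total count of children to lie in $\Omega$. The rooted identity is cleaner because the root of $\tilde{\cA}^\Omega$ is always a fixed point, while the unrooted identity additionally requires handling the edge-centred symmetries absorbed into $\mathrm{Sym}_0(\cU^\Omega)$.

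The remainder of the argument then mirrors Theorem~\ref{te:main2}. The centre-edge argument still yields $[z^n]\mathrm{Sym}_0(\cU^\Omega)(z) = O(\rho_\Omega^{n/2})$, since a fixed-point-free symmetry forces two isomorphic half-trees in $\cA^{\Omega^*}$. The variable $X^\Omega$ has finite exponential moments by the standard analytic treatment of P\'olya trees, so Lemma~\ref{le:meddeviation} gives $\sum_{|k - n/\Ex{X^\Omega}| \ge n^\alpha} \Prb{S_k = n} \le \exp(-\Theta(n^{2\alpha-1}))$. Stirling applied to the two probabilistic identities yields $f_n^\Omega \sim c_F^\Omega n^{-5/2}\rho_\Omega^{-n}$ and $\tilde{a}_n^\Omega \sim c_A^\Omega n^{-3/2}\rho_\Omega^{-n}$, whence the second-order ratio $\tilde{a}_n^\Omega/f_n^\Omega = n/\Ex{X^\Omega} + O(1)$. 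The final comparison then proceeds exactly as at the end of the proof of Theorem~\ref{te:main2}: split the set of $n$-vertex symmetries into the typical part $\cE''$ with $|r(S) - n/\Ex{X^\Omega}| < n^\alpha$ and the atypical complement, control the latter by the concentration bound, and estimate $|r(S)/(n!\tilde{a}_n^\Omega) - 1/(n!f_n^\Omega)| = O(n^{\alpha-1})\,r(S)/(n!\tilde{a}_n^\Omega)$ uniformly on $\cE''$ to produce the stated total variation bound.

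The main obstacle is the cycle-index manipulation in the second paragraph: the degree constraint breaks the substitution $U(M(z))$ that makes the unrestricted proof slick, and one must carefully track how the non-fixed orbits at each fixed vertex interact with the $\Omega$-restriction on the total degree. Once the correct random variables $X^\Omega, N^\Omega, \tilde{N}^\Omega$ are identified, the medium-deviations, centre-edge, and second-order-ratio ingredients transfer from the unrestricted case with only cosmetic changes.
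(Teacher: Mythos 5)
Your proposal follows the paper's high-level scheme (concentration for the number of fixed points of a random symmetry, then translation into a total variation bound via the ratio $\tilde{a}_n^\Omega/f_n^\Omega$), but the central technical ingredient is left unresolved. You propose to generalize the renewal identity~\eqref{eq:diff5} by identifying i.i.d.\ variables $X_i^\Omega$ encoding the local ``vertex plus non-fixed orbits'' structure and a size variable $N^\Omega$ for the fixed subtree, and you correctly observe that the degree restriction couples the number of non-fixed branches at a fixed vertex to that vertex's degree in the fixed subtree. But this coupling is exactly what destroys the substitution structure: the attachment weight at a fixed vertex now depends on its degree in the fixed subtree, so the contributions at different vertices are not i.i.d.\ and the convolution $\Pr{S_{N^\Omega}=n}$ is not available by a direct analogue of $U(M(z))$. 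You flag this as ``the main obstacle'' and assert that once the correct random variables are identified the rest transfers, without showing how to identify them; that is a genuine gap at precisely the step where the argument must deviate from the unrestricted case.

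The paper's proof avoids the issue entirely: it does not attempt a renewal identity for the degree-restricted case. Instead it obtains the medium-deviation concentration for the number of fixed points of a $\tilde{\cA}^\Omega$-symmetry by adapting branching process arguments from~\cite{MR3773800,MR3853863}, chosen precisely because the branching process framework handles the local constraint (fixed children plus non-fixed children lying in $\Omega$) without needing independence between the attachments and the shape of the fixed subtree. The bound is then transferred to $\cF^\Omega$-symmetries via the crude comparison~\eqref{eq:rough}, after which the final total-variation computation proceeds as in Theorem~\ref{te:main2}. A secondary slip in your write-up: Stirling gives only the first-order asymptotics $\tilde{a}_n^\Omega/f_n^\Omega \sim n/\Ex{X^\Omega}$; the error term $O(1)$ (or at least $O(n^\beta)$ with $\beta<1$) needed for the uniform estimate on $\cE''$ requires second-order information and does not follow from Stirling alone.
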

\begin{proof}
	Due to the strong similarity with the unrestricted case we only sketch the necessary steps. The probability that a random $\cF^\Omega$-symmetry with $n$ vertices has no fixed points is exponentially small by analogous arguments as in the unrestricted case. The medium deviation concentration inequality for the number of fixed points in an ${\cA}^{\Omega^*}$-symmetry was already noted in~\cite{MR3773800} and~\cite{MR3853863}. The branching process methods there may be easily adapted to get the same bound for ${\tilde{\cA}}^{\Omega}$-symmetries. We use branching process methods here in order to elegantly avoid having to deal with the  restrictions imposed by vertex degrees in the fixed point subtree to   the sum of the  root degrees in the attached branches. The same concentration inequality for unrooted trees then follows by using rough bounds as in~\eqref{eq:rough}.  Hence the rest of the proof is then fully analogous to the unrestricted case. The  required bound on the second order term in the asymptotic expansion for the number of rooted trees required to generalize Equation~\eqref{eq:second} follows readily by transfer theorems  from the known square-root singular expansion.
\end{proof}

The branches attached to the root of $\tilde{\mA}_n^{\Omega}$ form a multiset of degree restricted P\'olya trees, with the total number constrained to lie in $\Omega$. The distribution of the component sizes is hence a special case of an unlabelled Gibbs partition (with all weights set to $1$). Using the main result of~\cite{MR4079638} (slightly adapted to take into account periodicities) it follows that precisely one of the branches attached to the root of $\tilde{\mA}_n^{\Omega}$  is asymptotically macroscopic, with the rest having a stochastically bounded total size. The distribution of the largest branch is hence that of $\mA_{n - K_n}^{\Omega^*}$ for some independent stochastically bounded non-negative integer $K_n$. In other words, $\tilde{\mA}_n^{\Omega}$ (and by Proposition~\ref{pro:degree} also $\mT_n^\Omega$) therefore looks like an $(n - O_p(1))$-vertex random P\'olya tree with outdegrees in $\Omega^*$ and a small tree attached to its root to reach total vertex size $n$. Thus, we recover the main result of S.~\cite[Thm. 1.3]{MR3983790} in a different way without using the cycle pointing method.

\subsection{Subcritical classes of graphs}

%There are two separate definitions of subcriticality of labelled and unlabelled graphs, and at present it is not known whether subcriticality

The definition of a subcritical class of unlabelled graphs~\cite[Def. 10]{MR2873207} involves the concept of cycle index series $Z_G$ of a graph class $\cG$, as well as operations such as rooted and derived classes of structures. The cycle index series of $\cG$ is a power series in countably infinitely many variables
\[
Z_G(s_1, s_2, \ldots) = \sum_{n \ge 0} \frac{1}{n!} \sum_{(G, \sigma) \in \mathrm{Sym}(\cG)[\{1, \ldots, n\}]} s_1^{\sigma_1} s_2^{\sigma_2} \cdots 
\]
with symmetries of graphs being defined analogously as for trees, and  $\sigma_i$ denoting the number of cycles of length $i$ in $\sigma$ for all $i \ge 1$. Likewise, for rooted graphs all symmetries are required to fix the root vertex, and the cycle index series for classes of rooted graphs are defined accordingly. Derived graphs are rooted graphs, except that the root-vertex is required to have a place-holder label $*$. That is, for a derived class of graphs the symmetries on a set of vertices $U$ are pairs $(G, \sigma)$ with $G$ a rooted graph on the vertex set $U \cup \{*\}$ (assuming always $* \notin U$), and $\sigma: U \to U$ a permutation of $U$ such that the canonical bijective extension $U \cup \{*\} \to U \cup\{*\}$ is a graph automorphism of $G$. We refer the reader to~\cite{MR633783,MR1629341} for details on the technical background of these notions.

A class of graphs $\cG$ (assumed to be closed under relabelling of vertices) is called block-stable, if it contains the graph consisting of a single vertex, and any graph lies in $\cG$ if and only if all its blocks (maximal $2$-connected subgraphs) lie in $\cG$. Its subclasses of connected and $2$-connected graphs are denoted by $\cC$ and $\cG$. We let $\cC^\bullet$ and $\cB'$ denote the corresponding classes of vertex rooted and derived graphs. The ordinary generating series and cycle index sums of these classes are related by
\begin{align*}
	G(z) &= \exp\left(\sum_{i \ge 1} C(z^i)/i\right), \\
	C^\bullet(z) &= z \exp\left(\sum_{i \ge 1} Z_{B'}(C^\bullet(z^i), C^\bullet(z^{2i}, C^\bullet(z^{3i}), \ldots )/i\right).
\end{align*}
The class is termed subcritical in the unlabelled case~\cite[Def. 10]{MR2873207}, if additionally the following conditions are met.
\begin{enumerate}
	\item $C^\bullet(z)$ as positive radius of convergence $\rho_C>0$.
	\item $Z_{B'}(y, C^\bullet(z^{2}), C^\bullet(z^{3}), \ldots )$ is analytic at $(C^\bullet(\rho_C), \rho_C)$.
	\item $\sum_{i \ge 2} Z_{B'}(C^\bullet(z^i), C^\bullet(z^{2i}, C^\bullet(z^{3i}), \ldots )/i$ has radius of convergence strictly larger than $\rho_C$.
	\item $Z_{C}(0, z^2, z^3, \ldots)$ has radius of convergence strictly larger than $\rho_C$.
\end{enumerate}
Subcriticality for labelled graphs is defined differently, and the relationship whether subcriticality in the labelled case implies subcriticality in the unlabelled case or vice versa has not been investigated so far.

Subcritical classes of graphs are sometimes called tree-like, as they admit the Brownian tree as scaling limit and therefore their global  shape resembles that of a tree. This was shown for the labelled case in~\cite{MR3551197}, the unlabelled rooted case in~\cite{MR3853863}, and the unlabelled unrooted case in~\cite{MR4397030}.

In the following we assume that the graph class $\cG$ is subcritical in the unlabelled case. We let $\mC_n$ and $\mC_n^\bullet$ denote the random unlabelled unrooted connected graph and the random unlabelled rooted connected graph with $n$ vertices from the classes $\cC$ and $\cC^\bullet$ of unlabelled graphs. We let $F(\mC_n^\bullet)$ denote the unrooted graph obtained by forgetting which vertex is marked.

An earlier approximation result~\cite[Thm. 1]{MR4397030} formulated for block-weighted showed that $\mC_n$ may be approximated  in total variation by an $(n - O_p(1))$-sized random rooted graph with a small graph attached to its root to make up for the missing number of vertices. While this is already sufficient for all practical purposes, the following approximation result provides  a more elegant comparison:

\begin{proposition}
	\label{pro:subcrit}
	It holds that
	\[
	\lim_{n \to \infty} d_{\mathrm{TV}}(F(\mC_n^\bullet), \mC_n) = 0.
	\]
	For each $1/2 < \alpha < 1$ there exist constants $c,C>0$ such that for all $n \ge 1$ and all sets $\cE$
	\[
	|\Pr{F(\mC_n^\bullet) \in \cE} - \Pr{\mC_n \in \cE}| \le \exp(-cn^{2 \alpha -1}) + \Pr{F(\mC_n^\bullet) \in \cE} C n^{\alpha-1}.
	\]
\end{proposition}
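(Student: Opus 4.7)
The plan is to follow the template of the proof of Theorem~\ref{te:main2}, replacing trees with unlabelled connected graphs from the subcritical class $\cC$ and rooted trees with elements of $\cC^\bullet$. I would first derive the analog of the key decomposition~\eqref{eq:diff5}. Using the cycle-index series machinery of Bergeron--Labelle--Leroux~\cite{MR633783}, combined with the block decomposition underlying subcriticality~\cite[Def.~10]{MR2873207}, an $n$-vertex $\cC$-symmetry with at least one fixed vertex corresponds to a labelled connected graph on its fixed vertices together with non-fixed branches attached via the cycle-index sum $Z_{B'}$. Concretely, this would give
\[
C(z) = \mathrm{Sym}_0(\cC)(z) + L\Bigl(z \exp\Bigl(\textstyle\sum_{i \ge 2} Z_{B'}(C^\bullet(z^i), C^\bullet(z^{2i}), \ldots)/i\Bigr)\Bigr),
\]
where $L$ denotes the exponential generating series of labelled connected graphs from the class. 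Since $\rho_C$ is the radius of convergence of $C^\bullet$ and the inner substitution converges to a value at which $L$ is evaluated in its subcritical regime, the dominant behaviour of $[z^n] C(z)$ would then be expressed as a hitting probability of a random walk, analogously to the tree case.

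Next, I would bound $[z^n]\mathrm{Sym}_0(\cC)(z)$. The block-cut-vertex tree of a connected graph is invariant under every automorphism and possesses a unique center consisting either of a block or of a cut vertex. An automorphism of $\cC$ with no fixed vertex must therefore act nontrivially on its central block, forcing a pairing of isomorphic attached structures. Subcritical conditions~(3) and~(4) then imply that the corresponding generating series lives on $z^{2i}$-substitutions with radius of convergence strictly larger than $\rho_C$, yielding $[z^n]\mathrm{Sym}_0(\cC)(z) = O(\rho^{-n})$ for some $\rho > \rho_C$, analogously to~\eqref{eq:sym0}.

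I would then introduce the random variable $X$ with probability generating series
\[
\Ex{z^X} = \rho_C z \exp\Bigl(1 + \textstyle\sum_{i \ge 2} Z_{B'}(C^\bullet((\rho_C z)^i), C^\bullet((\rho_C z)^{2i}), \ldots)/i\Bigr),
\]
whose finite exponential moments are guaranteed by subcritical conditions~(2) and~(3). Applying the medium-deviation concentration inequality of Lemma~\ref{le:meddeviation} to $S_N$ yields concentration of the number of fixed vertices of a uniform $n$-sized $\cC$-symmetry around $n/\Ex{X}$ within a window of size $n^\alpha$; the same bound transfers to $\cC^\bullet$-symmetries by analogous branching-process arguments (or via a rough bound of the form~\eqref{eq:rough}). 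Combined with the second-order asymptotic $c_n^\bullet/c_n = n/\Ex{X} + O(1)$ supplied by the refined singularity analysis already carried out in~\cite{MR3853863,MR4397030}, the final computation at the end of the proof of Theorem~\ref{te:main2} then carries over verbatim.

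The main obstacle is the bound on $\mathrm{Sym}_0(\cC)(z)$: unlike for trees, where the center is transparently an edge or a vertex, the block-tree center of a connected graph from $\cC$ requires a careful case analysis depending on whether the center is a block or a cut vertex, and ``inverting'' a central block means tracing precisely how subcritical condition~(4) on $Z_C(0, z^2, z^3, \ldots)$ interacts with the internal automorphism group of that block. Once this bound is in hand, the remaining arguments are essentially mechanical adaptations of the tree case.
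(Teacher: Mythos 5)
Your overall template — decompose $C(z)$ into fixed-point-free symmetries plus a ``random walk'' contribution from symmetries with fixed points, bound $\mathrm{Sym}_0(\cC)(z)$, establish concentration for the fixed-point count, and transfer via the second-order ratio asymptotic — is the same as the paper's, and your identification of \cite{MR3853863} as the source of the branching-process concentration bound for $\cC^\bullet$-symmetries matches. However, you have misidentified ``the main obstacle.'' Bounding $\mathrm{Sym}_0(\cC)(z)$ requires no block-cut-vertex tree center analysis at all. By definition of the cycle index series, setting the first variable $s_1$ to zero kills exactly the terms coming from permutations with at least one fixed point, so
\[
\mathrm{Sym}_0(\cC)(z) = Z_C(0, z^2, z^3, \ldots)
\]
is a purely formal identity. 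The fourth condition in the definition of subcriticality asserts precisely that this series has radius of convergence strictly larger than $\rho_C$, which is the exponential smallness you need. In other words, condition (4) was introduced into Definition~10 of~\cite{MR2873207} exactly to make this step immediate; the structural case analysis you sketch (central block versus cut vertex, pairing of isomorphic attached parts, tracing how condition (4) ``interacts with the internal automorphism group of the central block'') is a rederivation of something that is already definitional, and would be far more delicate to carry out rigorously than the route you are trying to avoid. Recognizing this turns what you flagged as the hardest step into a one-line observation, and the rest of your sketch then goes through as a mechanical adaptation of the proof of Theorem~\ref{te:main2}, as you correctly anticipate.

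One smaller point: your explicit decomposition $C(z) = \mathrm{Sym}_0(\cC)(z) + L(\cdots)$ with $L$ the EGF of labelled connected graphs from the class is the natural analogue of the tree formula, but it is not as automatic as in the tree case — for trees, the fixed-point set of an automorphism is always a connected subtree, whereas for a general connected graph the fixed set of an automorphism is not automatically an induced connected subgraph of the same class. This is a second reason the paper does not spell out such a formula and instead routes the concentration estimate through the already-established branching-process bounds for $\cC^\bullet$-symmetries in \cite{MR3853863}. If you want to write the decomposition explicitly you would need to justify it via the block decomposition and the structure of automorphisms of block-stable classes, which is more work than the proposition requires.
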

\begin{proof}
	Setting the first variable in a cycle index sum to zero means discarding all symmetries with no fixed point, hence
	\[
	\mathrm{Sym}_0(\cC)(z) = Z_C(0, z^2, z^3, \ldots).
	\]
	Consequently, the fourth condition in the definition of subcriticality ensures that only an exponentially small fraction of symmetries of $\cC$ have no fixed-points. Furthermore, the concentration inequality for the number of fixed points in a symmetry of $\cC^\bullet$ was already obtained in~\cite{MR3853863} via branching process methods. The bound on the second order term in the asymptotics for the number of rooted graphs required to generalize Equation~\eqref{eq:second} follows readily from the square-root singular expansions given in~\cite{MR2873207}. Thus, the proof may be carried out entirely analogously as the proof of Theorem~\ref{te:main2}.
\end{proof}

\section*{Appendix}

The following deviation inequality may be  found in most textbooks on the subject.

\begin{lemma}[Medium deviation inequality for one-dimensional random walk]
	\label{le:meddeviation}
	Let $(X_i)_{i \in \ndN}$ be an i.i.d. family of real-valued random variables with $\Ex{X_1} = 0$ and $\Ex{e^{t X_1}} < \infty$ for all $t$ in some open interval containing zero. Then there are constants $\delta, c>0$ such that for all $n\in \ndN$, $x > 0$ and $0 \le\lambda\le\delta$ it holds that \[\Pr{|X_1 + \ldots + X_n| \ge x} \le 2 \exp(c n \lambda^2 - \lambda x).\]
\end{lemma}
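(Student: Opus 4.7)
The plan is to apply the standard exponential Chernoff bound and combine it with a second-order Taylor expansion of the cumulant generating function near zero. First I would choose $\delta>0$ small enough so that $\varphi(\lambda):=\Ex{e^{\lambda X_1}}$ is finite and twice continuously differentiable on the open interval $(-2\delta,2\delta)$; this is possible by the hypothesis that $\varphi$ is finite on an open interval containing zero, and standard results on moment generating functions then guarantee that $\varphi$ is in fact analytic and that $\varphi(0)=1$, $\varphi'(0)=\Ex{X_1}=0$.

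Next I would Taylor-expand $\log\varphi$ at zero. Since $\log\varphi(0)=0$ and $(\log\varphi)'(0)=\Ex{X_1}=0$, and $(\log\varphi)''$ is continuous and hence bounded on the compact interval $[-\delta,\delta]$, there exists a constant $c>0$ (depending only on the law of $X_1$) such that $\log\varphi(\lambda)\le c\lambda^2$ for all $\lambda\in[-\delta,\delta]$. Equivalently, $\Ex{e^{\lambda X_1}}\le\exp(c\lambda^2)$ on that interval.

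Now for the upper tail, I would write, for any $0\le\lambda\le\delta$ and $x>0$,
\[
\Pr{X_1+\cdots+X_n\ge x}\le e^{-\lambda x}\,\Ex{e^{\lambda X_1}}^n\le \exp(cn\lambda^2-\lambda x),
\]
by Markov's inequality applied to $e^{\lambda(X_1+\cdots+X_n)}$ together with independence. The lower tail $\Pr{X_1+\cdots+X_n\le -x}$ is handled identically by replacing $X_i$ with $-X_i$, whose MGF satisfies the same bound (possibly after shrinking $\delta$ so that both $\varphi(\lambda)$ and $\varphi(-\lambda)$ are controlled by $e^{c\lambda^2}$ on $[0,\delta]$). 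Adding the two one-sided bounds yields the factor of $2$ in the statement.

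There is no real obstacle here beyond being careful that the constants $\delta$ and $c$ are chosen to work simultaneously for both tails; the argument is entirely standard and requires no refinement of the Chernoff method. In particular, the bound is stated uniformly in $\lambda\in[0,\delta]$ precisely so that the user may optimize $\lambda$ — the optimal choice $\lambda=x/(2cn)$, valid when $x\le 2cn\delta$, then produces the familiar Gaussian-type bound $2\exp(-x^2/(4cn))$, which is what is used in the main body of the paper to deduce~\eqref{eq:hl2}.
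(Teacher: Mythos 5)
The paper does not give a proof of Lemma~\ref{le:meddeviation} at all; it simply remarks that the inequality ``may be found in most textbooks on the subject.'' Your Chernoff-bound argument --- expanding $\log\Ex{e^{\lambda X_1}}$ to second order near zero, deducing $\Ex{e^{\lambda X_1}}\le e^{c\lambda^2}$ on $[-\delta,\delta]$, applying Markov's inequality to $e^{\lambda S_n}$, and symmetrizing for the lower tail --- is exactly the standard textbook proof being alluded to, and it is correct and complete. One cosmetic simplification: since your bound $\log\varphi(\lambda)\le c\lambda^2$ is established on all of $[-\delta,\delta]$, the lower tail is already covered without any further shrinking of $\delta$; and in the degenerate case $X_1\equiv 0$ the lemma holds trivially with any $c>0$, so the requirement $c>0$ in the statement poses no problem.
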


\section*{Conflict of Interest Statement}
The corresponding author states that there is no conflict of interest. 

\section*{Data Availability Statement}

There is no data associated to this manuscript.

\acknowledgements
\label{sec:ack}
I warmly thank the referees for their helpful remarks. This research was funded in whole or
in part by the Austrian Science Fund (FWF) [10.55776/PAT6732623]. For open access purposes, the author
has applied a CC BY public copyright license to any author accepted manuscript version arising from this
submission.

\bibliographystyle{abbrvnat}
% use the following instead if you encounter problems 
%\bibliographystyle{alpha}
\bibliography{sample-dmtcs}
\label{sec:biblio}

\end{document}